\newtheorem{theorem}{Theorem}[section]
\newtheorem{corollary}[theorem]{Corollary}
\newtheorem{lemma}[theorem]{Lemma}
\newtheorem{proposition}[theorem]{Proposition}
\newtheorem{example}[theorem]{Example}
\begin{document}
\title[\textbf{On a class of $\lowercase{h}$-Fourier integral operators}]{%
\textbf{On a class of $\lowercase{h}$-Fourier integral operators}}
\author{\textbf{Harrat Chahrazed}}
\address[Harrat C.]{Universit\'{e} des Sciences et Technlogie Mohamed Boudiaf%
\\
Facult\'{e} des Sciences, D\'{e}partement de Math\'{e}matiques}
\email{chahrakha@yahoo.fr}
\author{\textbf{Senoussaoui Abderrahmane}}
\address[Senoussaoui A.]{Universit\'{e} d'Oran, Facult\'{e} des Sciences\\
D\'{e}partement de\ Math\'{e}matiques. B.P. 1524 El-Mnaouer, Oran,
ALGERIA.} \email{senoussaoui\_abdou@yahoo.fr}
\subjclass[2000]{Primary 35S30, 35S05 ; Secondary 47A10, 35P05}
\keywords{$h$-Fourier integral operators, $h$-pseudodifferential
operators, symbol and phase.}

\begin{abstract}
In this paper, we study the $L^{2}$-boundedness and $L^{2}$-compactness of a
class of $h$-Fourier integral operators. These operators are bounded
(respectively compact) if the weight of the amplitude is bounded
(respectively tends to $0)$.
\end{abstract}

\maketitle

\section{Introduction}

For $\varphi \in \mathcal{S}\left( \mathbb{R}^{n}\right) $ (the Schwartz
space), the integral operators%
\begin{equation}
F_{h}\varphi \left( x\right) =\iint e^{\frac{i}{h}\left( S\left( x,\theta
\right) -y\theta \right) }a\left( x,\theta \right) \varphi \left( y\right)
dyd\theta  \label{1.1}
\end{equation}%
appear naturally in the expression of the solutions of the semiclassical
hyperbolic partial differential equations and in the expression of the $%
C^{\infty }$-solution of the associate Cauchy's problem. Which appear two $%
C^{\infty }$-functions, the phase function $\phi \left( x,y,\theta \right)
=S\left( x,\theta \right) -y\theta $ and the amplitude $a.$.

Since 1970, many efforts have been made by several authors in order to study
these type of operators (see, e.g.,\cite{AsFu,Ha,He,Du,Ho1}). The first
works on Fourier integral operators deal with local properties. On the other
hand, K. Asada and D. Fujiwara (\cite{AsFu}) have studied for the first time
a class of Fourier integral operators defined on $\mathbb{R}^{n}.$

For the $h$-Fourier integral operators, an interesting question is under
which conditions on $a$ and $S$ these operators are bounded on $L^{2}$ or
are compact on $L^{2}$.

It has been proved in \cite{AsFu} by a very elaborated proof and with some
hypothesis on the phase function $\phi $ and the amplitude $a$ that all
operators of the form:

\begin{equation}
\left( I\left( a,\phi \right) \varphi \right) \left( x\right) =\underset{%
\mathbb{R}_{y}^{n}\times \mathbb{R}_{\theta }^{N}}{\iint }e^{i\phi \left(
x,\theta ,y\right) }a\left( x,\theta ,y\right) \varphi \left( y\right)
dyd\theta  \label{1.2}
\end{equation}
are bounded on $L^{2}$ where, $x\in \mathbb{R}^{n},$ $n\in \mathbb{N}^{\ast
} $ and $N\in \mathbb{N}$ (if $N=0,$ $\theta $ doesn't appear in $\left( \ref%
{1.2}\right) $). The technique used there is based on the fact that the
operators $I\left( a,\phi \right) I^{\ast }\left( a,\phi \right) ,I^{\ast
}\left( a,\phi \right) I\left( a,\phi \right) $ are pseudodifferential and
it uses Cald\'{e}ron-Vaillancourt's theorem (here $I\left( a,\phi \right)
^{\ast }$ is the adjoint of $I\left( a,\phi \right) )$.

In this work, we apply the same technique of \cite{AsFu} to establish the
boundedness and the compactness of the operators $\left( \ref{1.1}\right) $.
To this end we give a brief and simple proof for a result of \cite{AsFu} in
our framework.

We mainly prove the continuity of the operator $F_{h}$ on $L^{2}\left(
\mathbb{R}^{n}\right) $ when the weight of the amplitude $a$ is bounded.
Moreover, $F_{h}$ is compact on $L^{2}\left( \mathbb{R}^{n}\right) $ if this
weight tends to zero. Using the estimate given in \cite{Ro,Se} for $h$%
-pseudodifferential ($h$-admissible) operators, we also establish an $L^{2}$%
-estimate of $\left\Vert F_{h}\right\Vert .$

We note that if the amplitude $a$ is juste bounded, the Fourier integral
operator $F$ is not necessarily bounded on $L^{2}\left( \mathbb{R}%
^{n}\right) .$ Recently, M. Hasanov \cite{Ha} and we \cite{AIS} constructed
a class of unbounded Fourier integral operators with an amplitude in the H%
\"{o}rmander's class $S_{1,1}^{0}$ and in $\bigcap\limits_{0<\rho <1}S_{\rho
,1}^{0}.$

To our knowledge, this work constitutes a first attempt to diagonalize the $h
$-Fourier integral operators on $L^{2}\left( \mathbb{R}^{n}\right) $
(relying on the compactness of these operators).


\section{A general class of $h$-Fourier integral operators}


If $\varphi \in \mathcal{S}(\mathbb{R}^n)$, we consider the following
integral transformations
\begin{equation}
(I(a,\phi;h )\varphi )(x)=\underset{ \mathbb{R}_{y}^n\times \mathbb{R}%
_{\theta }^{N}}{\iint }e^{\frac{i}{h}\phi (x,\theta ,y)}a(x,\theta
,y)\varphi (y)dy\,d\theta  \label{2.1}
\end{equation}
where, $x\in \mathbb{R}^n$, $n\in \mathbb{N}^{\ast }$ and $N\in \mathbb{N}$
(if $N=0$, $\theta $ doesn't appear in \eqref{2.1}).

In general the integral \eqref{2.1} is not absolutely convergent, so we use
the technique of the oscillatory integral developed by H\"{o}rmander. The
phase function $\phi $ and the amplitude $a$ are assumed to satisfy the
following hypothesis:

\begin{itemize}
\item[(H1)] $\phi \in C^{\infty }(\mathbb{R}_{x}^n\times \mathbb{R} _{\theta
}^{N}\times \mathbb{R}_{y}^n,\mathbb{R})$ ($\phi$ is a real function)

\item[(H2)] For all $(\alpha ,\beta ,\gamma )\in \mathbb{N} ^n\times \mathbb{%
N}^{N}\times \mathbb{N}^n$, there exists $C_{\alpha ,\beta,\gamma }>0$
\begin{equation*}
|\partial _{y}^{\gamma }\partial _{\theta }^{\beta }\partial _{x}^{\alpha
}\phi (x,\theta ,y)| \leq C_{\alpha ,\beta ,\gamma }\lambda ^{(2-|\alpha |
-|\beta | -|\gamma | )_{+}}(x,\theta ,y)
\end{equation*}
where $\lambda (x,\theta ,y)=(1+|x| ^2+|\theta |^2+|y| ^2)^{1/2}$ called the
weight and
\begin{equation*}
(2-|\alpha | -|\beta| -|\gamma | )_{+} =\max (2-|\alpha | -|\beta | -|\gamma
| ,0)
\end{equation*}
\end{itemize}

\begin{itemize}
\item[(H3)] There exist $K_{1},K_{2}>0$ such that $\forall (x,\theta ,y)\in
\mathbb{R}_{x}^n\times \mathbb{R}_{\theta }^{N}\times \mathbb{R}_{y}^n$
\begin{equation*}
K_{1}\lambda (x,\theta ,y)\leq \lambda (\partial _{y}\phi ,\partial _{\theta
}\phi ,y)\leq K_{2}\lambda (x,\theta ,y)
\end{equation*}

\item[(H3*)] There exist $K_{1}^{\ast },K_{2}^{\ast }>0$ such that, $\forall
(x,\theta ,y)\in \mathbb{R}_{x}^n\times \mathbb{R}_{\theta }^{N}\times
\mathbb{R}_{y}^n$
\begin{equation*}
K_{1}^{\ast }\lambda (x,\theta ,y)\leq \lambda (x,\partial _{\theta }\phi
,\partial _{x}\phi )\leq K_{2}^{\ast }\lambda (x,\theta ,y)
\end{equation*}
\end{itemize}

For any open subset $\Omega $ of $\mathbb{R}_{x}^n\times \mathbb{R}_{\theta
}^{N}\times \mathbb{R}_{y}^n$, $\mu \in \mathbb{R}$ and $\rho \in [0,1] $,
we set
\begin{align*}
\Gamma _{\rho }^{\mu }(\Omega ) &=\big\{a\in C^{\infty }(\Omega ): \forall
(\alpha ,\beta ,\gamma )\in \mathbb{N}^n\times \mathbb{N}^{N}\times \mathbb{N%
}^n, \; \exists C_{\alpha ,\beta ,\gamma }>0: \\
&\quad |\partial _{y}^{\gamma }\partial _{\theta }^{\beta }\partial
_{x}^{\alpha }a(x,\theta ,y)| \leq C_{\alpha ,\beta ,\gamma }\lambda ^{\mu
-\rho (|\alpha | +|\beta | +|\gamma| )}(x,\theta ,y)\big\}
\end{align*}

When $\Omega =\mathbb{R}_{x}^n\times \mathbb{R}_{\theta }^{N}\times \mathbb{R%
}_{y}^n$, we denote $\Gamma _{\rho }^{\mu}(\Omega )=\Gamma _{\rho }^{\mu }$.

To give a meaning to the right hand side of \eqref{2.1}, we consider $g\in
\mathcal{S}(\mathbb{R}_{x}^n\times \mathbb{R}_{\theta }^{N}\times \mathbb{R}%
_{y}^n)$, $g(0)=1$. If $a\in \Gamma _{0}^{\mu }$, we define
\begin{equation*}
a_{\sigma }(x,\theta ,y)=g(x/\sigma ,\theta /\sigma ,y/\sigma )a(x,\theta
,y),\quad \sigma >0.
\end{equation*}

\begin{theorem}
If $\phi $ satisfies (H1), (H2), (H3) and (H3*), and if $a\in \Gamma
_{0}^{\mu }$, then

1. For all $\varphi \in \mathcal{S}(\mathbb{R}^n)$, $\lim_{\sigma \to
+\infty } [ I(a_{\sigma },\phi;h)\varphi] (x)$ exists for every point $x\in
\mathbb{R}^n$ and is independent of the choice of the function $g$. We
define
\begin{equation*}
(I(a,\phi;h )\varphi )(x):=\lim_{\sigma \to +\infty} (I(a_{\sigma },\phi;h
)\varphi)(x)
\end{equation*}

2. $I(a,\phi ;h)\in \mathcal{L}(\mathcal{S}(\mathbb{R}^{n}))$ and $I(a,\phi
;h)\in \mathcal{L}(\mathcal{S}^{\prime }(\mathbb{R}^{n}))$ (here $\mathcal{L}%
(E)$ is the space of bounded linear mapping from $E$ to $E$ and $\mathcal{S}%
^{\prime }(\mathbb{R}^{n})$ the space of all distributions with temperate
growth on $\mathbb{R}^{n}$).
\end{theorem}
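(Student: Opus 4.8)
The plan is to interpret \eqref{2.1} as an oscillatory integral and to regularize it by repeated integration by parts, transferring the oscillation of $e^{\frac{i}{h}\phi}$ onto the amplitude. Since $\varphi\in\mathcal{S}(\mathbb{R}^{n})$ is fixed, the genuine integrand is $a_{\sigma}(x,\theta,y)\varphi(y)$, and I would use the first–order operator
\[
L=\frac{1}{1+|\partial_{\theta}\phi|^{2}+|\partial_{y}\phi|^{2}}\Big(1+\frac{h}{i}\,\partial_{\theta}\phi\cdot\partial_{\theta}+\frac{h}{i}\,\partial_{y}\phi\cdot\partial_{y}\Big),
\]
which is built so that $L\,e^{\frac{i}{h}\phi}=e^{\frac{i}{h}\phi}$: the two first–order pieces reproduce $|\partial_{\theta}\phi|^{2}$ and $|\partial_{y}\phi|^{2}$, and together with the constant term they cancel the denominator. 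The denominator is $\ge 1$, so $L$ is globally defined without any partition of unity. Writing $^{t}L$ for its formal transpose and integrating by parts in $(\theta,y)$ gives, for every $k\ge 0$,
\[
(I(a_{\sigma},\phi;h)\varphi)(x)=\iint e^{\frac{i}{h}\phi}\,(^{t}L)^{k}\!\big(a_{\sigma}(x,\cdot,\cdot)\,\varphi\big)\,dy\,d\theta .
\]

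The heart of the argument, and the step I expect to be the main obstacle, is the following integrability lemma: for $k$ large enough the function $(^{t}L)^{k}(a_{\sigma}\varphi)$ is bounded, uniformly in $\sigma\ge 1$, by a fixed $L^{1}(\mathbb{R}^{N}_{\theta}\times\mathbb{R}^{n}_{y})$ function. To prove it one expands $(^{t}L)^{k}$ by the Leibniz rule; every application of $^{t}L$ either differentiates the coefficients $\partial_{\theta}\phi/(1+|\nabla_{\theta,y}\phi|^{2})$ and $\partial_{y}\phi/(1+|\nabla_{\theta,y}\phi|^{2})$, the amplitude $a_{\sigma}$, or $\varphi$. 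Here (H2) is essential: it bounds $\partial\phi$ by $\lambda$ and forces all second and higher derivatives of $\phi$ to stay bounded, so differentiating a coefficient costs nothing worse than the prefactor already present, while $a_{\sigma}\in\Gamma_{0}^{\mu}$ contributes at most $\lambda^{\mu}$. The delicate point is convergence in $\theta$: where $|\nabla_{\theta,y}\phi|\gtrsim\lambda$ the factor $(1+|\nabla_{\theta,y}\phi|^{2})^{-k}$ supplies the decay $\lambda^{-2k}$, whereas on the complementary region (H3) forces $|y|\gtrsim\lambda\ge|\theta|$, and there the rapid decay of $\varphi(y)$ is traded for decay in $\theta$. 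Combining the two regions yields the uniform $L^{1}$ bound for $k>\mu+n+N$.

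With the lemma, Part 1 follows from dominated convergence: as $\sigma\to+\infty$, $a_{\sigma}\to a$ together with all derivatives, with $\Gamma_{0}^{\mu}$–seminorms bounded uniformly in $\sigma$, so $(^{t}L)^{k}(a_{\sigma}\varphi)\to(^{t}L)^{k}(a\varphi)$ pointwise under a fixed integrable majorant. Hence $\lim_{\sigma\to+\infty}(I(a_{\sigma},\phi;h)\varphi)(x)$ exists, equals $\iint e^{\frac{i}{h}\phi}\,(^{t}L)^{k}(a\varphi)\,dy\,d\theta$, and, since this expression no longer involves $g$, is independent of the choice of regularizer.

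For Part 2 I would show that $x\mapsto(I(a,\phi;h)\varphi)(x)$ is Schwartz by estimating its seminorms. Differentiating under the (regularized) integral, each $\partial_{x}^{\alpha}$ brings down factors of $\partial_{x}\phi$ of size $\lambda$ and $x$–derivatives of $a$, so $\partial_{x}^{\alpha}(I(a,\phi;h)\varphi)$ is again an oscillatory integral of the same class with amplitude in some $\Gamma_{0}^{\mu'}$, which the previous lemma bounds. To obtain the decay in $x$ required for the Schwartz seminorms, I would multiply by $x^{\beta}$ and integrate by parts once more, now using the phase gradient in the output variable: this is exactly what (H3*) controls, giving an operator $\widetilde{L}$ with $\widetilde{L}e^{\frac{i}{h}\phi}=e^{\frac{i}{h}\phi}$ whose transpose produces $\langle x\rangle^{-1}$ per step, the $x$–analogue of the role played by (H3). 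This estimates every seminorm of $I(a,\phi;h)\varphi$ by finitely many seminorms of $\varphi$, so $I(a,\phi;h)\in\mathcal{L}(\mathcal{S}(\mathbb{R}^{n}))$. Finally, $\mathcal{S}'$–continuity follows by duality: the formal transpose of $I(a,\phi;h)$ is an operator of the same type with the roles of $x$ and $y$ interchanged, so that (H3) and (H3*) swap and it too is continuous on $\mathcal{S}(\mathbb{R}^{n})$; defining $I(a,\phi;h)$ on $\mathcal{S}'(\mathbb{R}^{n})$ by $\langle I(a,\phi;h)u,\psi\rangle=\langle u,{}^{t}I(a,\phi;h)\psi\rangle$ then gives $I(a,\phi;h)\in\mathcal{L}(\mathcal{S}'(\mathbb{R}^{n}))$.
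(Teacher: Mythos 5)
First, a remark on the comparison: the paper itself does not prove this theorem at all — its ``proof'' is a citation to \cite{He} and \cite[Proposition II.2]{Ro} — and your proposal is essentially the standard oscillatory-integral construction carried out in those references. Part 1 of your argument is sound: your $L$ does satisfy $Le^{\frac{i}{h}\phi}=e^{\frac{i}{h}\phi}$, the integration by parts is legitimate because $a_{\sigma}$ has compact support, and your two-region estimate (gain of $\lambda^{-1}$ per step where $|\nabla_{\theta,y}\phi|\geq \varepsilon\lambda$; the bound $|y|\geq c\lambda$ forced by (H3) on the complement, traded against the Schwartz decay of $\varphi$) is the correct way to compensate for having omitted the $|y|^{2}$ term from the denominator that the references build into their operator. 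One imprecision: on the bad region your denominator is only bounded below by $1$, so the coefficients of $^{t}L$ there are $O(\lambda)$ and $(^{t}L)^{k}$ can grow like a power of $\lambda$; hence the Schwartz order $M$ of the majorant must be chosen after $k$, and the threshold ``$k>\mu+n+N$'' is not literally right — but ``first $k$ large, then $M$ large'' works, so this is cosmetic.

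The genuine flaw is in Part 2, in the mechanism you propose for the decay in $x$. You cannot ``integrate by parts using the phase gradient in the output variable'': the integral is over $(\theta,y)$ only, so any operator $\widetilde{L}$ whose transpose you can move onto the amplitude may contain $\partial_{\theta}$ and $\partial_{y}$ but never $\partial_{x}$; the term $\partial_{x}\phi\cdot\partial_{x}$ that (H3*) controls is simply not available, and no such $\widetilde{L}$ with $\widetilde{L}e^{\frac{i}{h}\phi}=e^{\frac{i}{h}\phi}$ producing a factor $\langle x\rangle^{-1}$ exists. Nor can (H3*) be salvaged using only the usable pieces: it bounds $\lambda(x,\partial_{\theta}\phi,\partial_{x}\phi)$ from below by $K_{1}^{\ast}\lambda$, but the $\partial_{x}\phi$ component may carry all the size, so $1+|x|^{2}+|\partial_{\theta}\phi|^{2}$ need not dominate $\lambda^{2}$ and the corresponding operator gains nothing. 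Fortunately the step is unnecessary and the fix lies inside your own Part 1 analysis: since $\lambda(x,\theta,y)\geq(1+|x|^{2})^{1/2}$, the factor $\lambda^{-k}$ gained on the good region already controls $\langle x\rangle^{-k}$, while on the bad region $|y|\geq c\lambda\geq c\langle x\rangle$ lets the Schwartz decay of $\varphi$ absorb any power of $x$; applying this to the (again oscillatory) integrals representing $\partial_{x}^{\alpha}(I(a,\phi;h)\varphi)$ gives all Schwartz seminorm bounds from (H2) and (H3) alone. Your final step is then correct, and it identifies the true role of (H3*): the formal transpose swaps $x$ and $y$, converting (H3*) into (H3) for the transposed phase, so the transpose is also continuous on $\mathcal{S}(\mathbb{R}^{n})$ and duality yields $I(a,\phi;h)\in\mathcal{L}(\mathcal{S}^{\prime}(\mathbb{R}^{n}))$.
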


\begin{proof}
see \cite{He} or \cite[propostion II.2]{Ro}.
\end{proof}

\begin{example}
Let's give two examples of operators of the form $\left( \ref{2.1}\right) $
which satisfy $\left( H1\right) $ to $(H3)^{\ast }$:

\begin{enumerate}
\item The Fourier transform $\mathcal{F}\psi \left( x\right) =$ $\underset{%
\mathbb{R}^{n}}{\int }e^{-ixy}\psi \left( y\right) dy,$ $\psi \in \mathcal{S}%
(\mathbb{R}^{n}),$

\item Pseudodifferential operators $A\psi \left( x\right) =$ $\left( 2\pi
\right) ^{-n}\underset{\mathbb{R}^{2n}}{\int }e^{i\left( x-y\right) \theta
}a\left( x,y,\theta \right) \psi \left( y\right) dyd\theta ,$ $\psi \in
\mathcal{S}(\mathbb{R}^{n}),$ $a\in \Gamma _{0}^{\mu }\left( \mathbb{R}%
^{3n}\right) .$
\end{enumerate}
\end{example}


\section{Assumptions and Preliminaries}


We consider the special form of the phase function
\begin{equation}
\phi (x,y,\theta )=S(x,\theta )-y\theta  \label{3.1}
\end{equation}
where $S$ satisfies

\begin{itemize}
\item[(G1)] $S\in C^{\infty }(\mathbb{R}_{x}^n\times \mathbb{R}_{\theta }^n,%
\mathbb{R})$,

\item[(G2)] For each $(\alpha ,\beta )\in \mathbb{N}^{2n}$, there exist $%
C_{\alpha ,\beta }>0$, such that
\begin{equation*}
|\partial _{x}^{\alpha }\partial_{\theta }^{\beta }S(x,\theta )| \leq
C_{\alpha ,\beta}\lambda (x,\theta )^{(2-|\alpha | -|\beta | )},
\end{equation*}

\item[(G3)] There exists $\delta _{0}>0$ such that
\begin{equation*}
\inf_{x,\theta \in \mathbb{R}^n}|\det \frac{\partial ^2S}{ \partial
x\partial \theta }(x,\theta )| \geq \delta _{0}.
\end{equation*}
\end{itemize}

\begin{lemma}[\protect\cite{MeSe}]
Let's assume that $S$ satisfies (G1), (G2), (G3). Then the function $\phi
(x,y,\theta )=S(x,\theta )-y\theta $ satisfies (H1), (H2), (H3) and (H3*).
\end{lemma}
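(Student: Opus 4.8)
The plan is to check the four conditions in turn. Conditions (H1) and (H2) follow directly from the explicit form $\phi(x,\theta,y)=S(x,\theta)-y\theta$, so the real content is the pair of two-sided weight estimates (H3) and (H3*). For (H1), $S$ is real and smooth by (G1) and $(x,\theta,y)\mapsto y\theta$ is a real polynomial, so $\phi\in C^\infty$ is real-valued. For (H2) I would differentiate termwise: the bilinear part $y\theta$ survives only when $|\alpha|=0$ and $|\beta|,|\gamma|\le 1$, and each surviving derivative ($y\theta$, a single $\theta_j$, a single $y_k$, or a constant) is trivially dominated by $\lambda(x,\theta,y)^{(2-|\alpha|-|\beta|-|\gamma|)_{+}}$; the $S$-part vanishes unless $\gamma=0$, and then (G2) gives $|\partial_\theta^\beta\partial_x^\alpha S|\le C\lambda(x,\theta)^{2-|\alpha|-|\beta|}$, which is bounded by $C\lambda(x,\theta,y)^{(2-|\alpha|-|\beta|)_{+}}$ since $1\le\lambda(x,\theta)\le\lambda(x,\theta,y)$ (treating the nonnegative- and negative-exponent cases separately).

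The heart of the argument is a uniform global-inversion estimate that I would establish first. From (G2) with $|\alpha|+|\beta|=2$, every second derivative of $S$ is uniformly bounded; in particular the mixed Hessian $M(x,\theta)=\frac{\partial^2 S}{\partial x\partial\theta}(x,\theta)$ has uniformly bounded entries, while (G3) gives $|\det M|\ge\delta_0$, so $M^{-1}=(\det M)^{-1}\,\mathrm{adj}\,M$ is bounded uniformly on $\mathbb{R}^{2n}$, and the same holds for $M^{T}$. Now fix $\theta$ and consider $F_\theta:x\mapsto\nabla_\theta S(x,\theta)$, whose differential is $M$ up to transposition and hence has uniformly bounded inverse. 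By Hadamard's global inverse theorem $F_\theta$ is then a diffeomorphism of $\mathbb{R}^n$ whose inverse map is globally Lipschitz with a constant $C'$ independent of $\theta$ (its differential being $(DF_\theta)^{-1}$ at the image point). Comparing with the reference value $F_\theta(0)=\nabla_\theta S(0,\theta)$, which (G2) bounds by $C(1+|\theta|^2)^{1/2}$, this would give $|x|\le C(|\nabla_\theta S(x,\theta)|+(1+|\theta|^2)^{1/2})$; running the symmetric argument with $x$ fixed and $\theta\mapsto\nabla_x S(x,\theta)$ gives $|\theta|\le C(|\nabla_x S(x,\theta)|+(1+|x|^2)^{1/2})$.

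With these two estimates, (H3) and (H3*) follow, using $\partial_y\phi=-\theta$, $\partial_\theta\phi=\nabla_\theta S-y$ and $\partial_x\phi=\nabla_x S$. The upper bounds are immediate, since (G2) controls $|\nabla_\theta S|$ and $|\nabla_x S|$ by $C\lambda(x,\theta)$. For the lower bound in (H3), set $\eta=\partial_\theta\phi=\nabla_\theta S-y$; the first inversion estimate gives $|x|\le C(|y|+|\eta|+(1+|\theta|^2)^{1/2})$, which supplies the missing $|x|^2$ and hence $\lambda(x,\theta,y)\le K_1^{-1}\lambda(\partial_y\phi,\partial_\theta\phi,y)$. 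For the lower bound in (H3*), $|x|^2$ is already present; the second inversion estimate gives $|\theta|^2\le C(|\partial_x\phi|^2+1+|x|^2)$, and then $y=\nabla_\theta S-\partial_\theta\phi$ together with (G2) supplies $|y|^2\le C(1+|x|^2+|\partial_x\phi|^2+|\partial_\theta\phi|^2)$, completing the estimate. I expect the global-inversion step to be the main obstacle: it is what turns the pointwise data (G2)--(G3) into genuinely uniform Lipschitz bounds on the inverse maps, and hence makes the constants $K_1,K_2,K_1^{*},K_2^{*}$ independent of the base point $(x,\theta,y)$.
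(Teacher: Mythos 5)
Your argument is correct, but it cannot be matched line-by-line against the paper, because the paper contains no proof of this lemma at all: the statement is imported verbatim from \cite{MeSe}, as is the Lipschitz-type estimate \eqref{3.2}--\eqref{3.3} quoted immediately after it. What you have done is reconstruct the missing argument, and your key step --- applying Hadamard's global inverse theorem to $F_\theta:x\mapsto\partial_\theta S(x,\theta)$ (and symmetrically to $\theta\mapsto\partial_x S(x,\theta)$), using (G2) to bound the mixed Hessian and (G3) together with the adjugate formula to bound its inverse uniformly --- is precisely the content of the second imported lemma: the bound $|x-x'|\le C_2|\partial_\theta S(x,\theta)-\partial_\theta S(x',\theta)|$ of \eqref{3.3} is exactly the statement that $F_\theta^{-1}$ is Lipschitz with a constant uniform in $\theta$, which you obtain from $\sup\Vert (DF_\theta)^{-1}\Vert<\infty$. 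So your route subsumes both of the paper's citations to \cite{MeSe}, which is a genuine gain in self-containedness. The surrounding bookkeeping checks out: (H1) and (H2) are routine once one notes $1\le\lambda(x,\theta)\le\lambda(x,\theta,y)$ and splits the exponent by sign; the upper bounds in (H3) and (H3*) follow from (G2); and the lower bounds follow correctly from your two inversion estimates via $\partial_y\phi=-\theta$, $\partial_\theta\phi=\partial_\theta S-y$, $\partial_x\phi=\partial_x S$. The one thing you should make explicit, rather than gesture at, is the exact version of Hadamard's theorem you invoke (a $C^1$ map of $\mathbb{R}^n$ whose differential has uniformly bounded inverse is a global diffeomorphism with globally Lipschitz inverse); it is the sole nonelementary ingredient, and the uniformity in the parameter $\theta$ --- which you correctly flag as the crux --- lives entirely in that statement.
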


\begin{lemma}[\protect\cite{MeSe}]
If $S$ satisfies (G1), (G2) and (G3), then there exists $C_{2}>0$ such that
for all $(x,\theta ),(x^{\prime },\theta ^{\prime })\in \mathbb{R}^{2n}$,
\begin{equation}
|x-x^{\prime }|+|\theta -\theta ^{\prime }|\leq C_{2}\big[|(\partial
_{\theta }S)(x,\theta )-(\partial _{\theta }S)(x^{\prime },\theta ^{\prime
})|+|\theta -\theta ^{\prime }|\big]  \label{3.2}
\end{equation}
\end{lemma}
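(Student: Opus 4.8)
The plan is to prove the Lipschitz-type estimate
\begin{equation*}
|x-x'|+|\theta-\theta'|\leq C_2\big[|(\partial_\theta S)(x,\theta)-(\partial_\theta S)(x',\theta')|+|\theta-\theta'|\big]
\end{equation*}
by reducing it to a statement about the map $(x,\theta)\mapsto(\partial_\theta S(x,\theta),\theta)$ and showing this map is a global diffeomorphism with Lipschitz inverse. First I would fix $\theta$ and study the partial map $x\mapsto (\partial_\theta S)(x,\theta)$. By (G3), its Jacobian is $\frac{\partial^2 S}{\partial x\partial\theta}(x,\theta)$, whose determinant is bounded below in absolute value by $\delta_0$ uniformly in $(x,\theta)$; by (G2) the entries of this Hessian block are bounded (the case $|\alpha|=|\beta|=1$ gives $\lambda^0=1$). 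Hence the Jacobian has bounded inverse uniformly, which is exactly the hypothesis of the Hadamard–Lévy global inversion theorem.

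The key steps, in order, are as follows. First, record that by (G2) with $|\alpha|+|\beta|=2$ all second derivatives of $S$ are uniformly bounded, so $\partial_\theta S$ is globally Lipschitz in $(x,\theta)$ jointly. Second, apply the mean value inequality along the segment joining $(x',\theta)$ to $(x',\theta')$ (varying only $\theta$) to control $|(\partial_\theta S)(x',\theta)-(\partial_\theta S)(x',\theta')|$ by a constant times $|\theta-\theta'|$; this is what lets the $|\theta-\theta'|$ term on the right absorb the $\theta$-discrepancy. Third, for the segment joining $(x,\theta)$ to $(x',\theta)$ (varying only $x$), use the uniform lower bound on $\det\frac{\partial^2 S}{\partial x\partial\theta}$ together with the uniform upper bound on its entries to conclude that $\big(\frac{\partial^2 S}{\partial x\partial\theta}\big)^{-1}$ is uniformly bounded in operator norm, whence
\begin{equation*}
|x-x'|\leq C\,|(\partial_\theta S)(x,\theta)-(\partial_\theta S)(x',\theta)|.
\end{equation*}
Finally, I would combine these two estimates with the triangle inequality, inserting and removing the intermediate point $(x',\theta)$, and collect constants into a single $C_2$.

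The main obstacle is the third step: a uniform lower bound on the determinant plus a uniform upper bound on the entries does not by itself give a pointwise Lipschitz bound on $x\mapsto x'$, because $\partial_\theta S$ is nonlinear and the inverse of a matrix depends on more than its determinant. The clean way around this is to note that for an $n\times n$ matrix $M$ one has $\|M^{-1}\|\leq \dfrac{\|M\|^{n-1}}{|\det M|}$ (via the cofactor/adjugate formula), so the uniform bounds from (G2) and (G3) yield a uniform bound on $\big\|\big(\frac{\partial^2 S}{\partial x\partial\theta}\big)^{-1}\big\|$; then the fundamental theorem of calculus applied to $t\mapsto(\partial_\theta S)(x'+t(x-x'),\theta)$ together with this uniform inverse bound delivers the desired Lipschitz estimate in $x$. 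This is precisely where the Hadamard-type global inversion reasoning enters, guaranteeing that the estimate holds globally and not merely locally, and it is the step I would write out in full while treating the triangle-inequality bookkeeping as routine.
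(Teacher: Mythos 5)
The paper under review does not actually prove this lemma --- it quotes it from \cite{MeSe} --- so there is no internal proof to compare against; your outline follows the same standard route as the argument in that reference (and in \cite{AsFu}): uniform bounds on second derivatives from (G2), the Jacobian determinant bound (G3), a Hadamard-type global inversion theorem for $x\mapsto(\partial_{\theta}S)(x,\theta)$ at fixed $\theta$, and a triangle inequality through the intermediate point $(x',\theta)$. The reduction steps are all sound: (G2) with $|\alpha|+|\beta|=2$ does give globally bounded second derivatives, hence the $\theta$-discrepancy is absorbed into the $|\theta-\theta'|$ term, and the adjugate bound $\|M^{-1}\|\leq C_{n}\|M\|^{n-1}/|\det M|$ combined with (G3) does give a uniform bound on $\bigl\|\bigl(\frac{\partial^{2}S}{\partial x\partial\theta}\bigr)^{-1}\bigr\|$.

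However, one step of your write-up is stated incorrectly, and it is exactly the delicate one you flagged. You claim that the fundamental theorem of calculus applied to $t\mapsto(\partial_{\theta}S)(x'+t(x-x'),\theta)$, together with the uniform inverse bound, yields $|x-x'|\leq C\,|(\partial_{\theta}S)(x,\theta)-(\partial_{\theta}S)(x',\theta)|$. It does not: FTC along that segment gives $(\partial_{\theta}S)(x,\theta)-(\partial_{\theta}S)(x',\theta)=M\,(x-x')$ with $M=\int_{0}^{1}\frac{\partial^{2}S}{\partial x\partial\theta}(x'+t(x-x'),\theta)\,dt$, and an average of matrices each having a uniformly bounded inverse can be singular (in $\mathbb{R}^{2}$ the matrices $I$ and $-I$ both have determinant $1$ and norm $1$, yet their average is $0$); pointwise bounds on determinants and entries do not pass to the average. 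The correct mechanism --- and the reason the Hadamard--L\'{e}vy theorem is indispensable rather than a safety net --- is to invoke it \emph{first}: the uniform bound on $\bigl\|\bigl(\frac{\partial^{2}S}{\partial x\partial\theta}\bigr)^{-1}\bigr\|$ makes $f_{\theta}:=(\partial_{\theta}S)(\cdot,\theta)$ a global diffeomorphism of $\mathbb{R}^{n}$; its inverse $g_{\theta}$ satisfies $Dg_{\theta}=(Df_{\theta})^{-1}\circ g_{\theta}$, hence $\|Dg_{\theta}\|\leq C$ uniformly in $\theta$ as well; then the mean value inequality applied to $g_{\theta}$ along the segment joining $f_{\theta}(x')$ to $f_{\theta}(x)$ in the \emph{target} space gives $|x-x'|=|g_{\theta}(f_{\theta}(x))-g_{\theta}(f_{\theta}(x'))|\leq C\,|f_{\theta}(x)-f_{\theta}(x')|$. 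With that substitution your proof is complete, and the remaining triangle-inequality bookkeeping is as routine as you say.
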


when $\theta =\theta ^{\prime }$ in \eqref{3.2}, there exists $C_{2}>0$,
such that for all $(x,x^{\prime },\theta )\in \mathbb{R}^{3n}$,
\begin{equation}
|x-x^{\prime }| \leq C_{2}|(\partial _{\theta }S)(x,\theta )-(\partial
_{\theta }S)(x^{\prime },\theta )| .  \label{3.3}
\end{equation}

\begin{proposition}
If $S$ satisfies (G1) and (G2), then there exists a constant $\epsilon
_{0}>0 $ such that the phase function $\phi $ given in \eqref{3.1} belongs
to $\Gamma _{1}^{2}(\Omega _{\phi ,\epsilon _{0}})$ where
\begin{equation*}
\Omega _{\phi ,\epsilon _{0}}=\big\{(x,\theta ,y)\in \mathbb{R}%
^{3n};\;\;|\partial _{\theta }S(x,\theta )-y|^{2}<\epsilon
_{0}\;(|x|^{2}+|y|^{2}+|\theta |^{2})\big\}.
\end{equation*}
\end{proposition}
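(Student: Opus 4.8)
The plan is to reduce the whole statement to one elementary fact: on the set $\Omega_{\phi,\epsilon_0}$ the two weights $\lambda(x,\theta)$ and $\lambda(x,\theta,y)$ are comparable, provided $\epsilon_0$ is chosen small enough. Granting this, the estimates defining $\Gamma_1^2$ follow by writing $\phi=S(x,\theta)-y\theta$ and treating the two pieces separately: the derivatives of $S$ are controlled by (G2) after trading $\lambda(x,\theta)$ for $\lambda(x,\theta,y)$, while the bilinear term $-y\theta$ can be differentiated by hand and bounded directly.

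First I would establish the weight comparison. Since $\lambda(x,\theta)\le\lambda(x,\theta,y)$ holds trivially, only the reverse inequality needs work. Applying (G2) with $\alpha=0$ and $|\beta|=1$ gives $|\partial_\theta S(x,\theta)|\le C\lambda(x,\theta)$, so on $\Omega_{\phi,\epsilon_0}$ the triangle inequality yields
\[
|y|\le|y-\partial_\theta S(x,\theta)|+|\partial_\theta S(x,\theta)|\le \sqrt{\epsilon_0}\,(|x|^2+|\theta|^2+|y|^2)^{1/2}+C\lambda(x,\theta).
\]
Squaring and writing $A=|x|^2+|\theta|^2$, $B=|y|^2$, I obtain $B\le 2\epsilon_0(A+B)+2C^2\lambda(x,\theta)^2$; choosing $\epsilon_0<\tfrac14$ lets me absorb the term $2\epsilon_0 B$ into the left-hand side and conclude $1+A+B\le C'\lambda(x,\theta)^2$, that is, $\lambda(x,\theta,y)\le C''\lambda(x,\theta)$. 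Consequently $\lambda(x,\theta,y)^s$ and $\lambda(x,\theta)^s$ are comparable for every real exponent $s$, with constants depending on $s$.

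Next I would verify the symbol estimates. For the $S$-part, note that $\partial_y^\gamma\partial_\theta^\beta\partial_x^\alpha S$ vanishes unless $\gamma=0$, and when $\gamma=0$ hypothesis (G2) bounds it by $C\lambda(x,\theta)^{2-|\alpha|-|\beta|}$; the weight comparison converts this into $C'\lambda(x,\theta,y)^{2-|\alpha|-|\beta|-|\gamma|}$ on $\Omega_{\phi,\epsilon_0}$, as required. For the bilinear part $-y\theta$, every derivative with $|\alpha|\ge 1$, $|\beta|\ge 2$ or $|\gamma|\ge 2$ vanishes, and the finitely many surviving derivatives, namely $-y\theta$, $-y_j$, $-\theta_k$ and $-\delta_{jk}$, are each dominated by $\lambda(x,\theta,y)^{2-|\alpha|-|\beta|-|\gamma|}$ using $|y\theta|\le\tfrac12\lambda(x,\theta,y)^2$ together with $|y|,|\theta|\le\lambda(x,\theta,y)$. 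Adding the two contributions gives the full family of estimates, so $\phi\in\Gamma_1^2(\Omega_{\phi,\epsilon_0})$, and smoothness is immediate from (G1).

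I expect the only genuine obstacle to be the quantitative choice of $\epsilon_0$ in the first step: one must ensure the coefficient $2\epsilon_0$ multiplying $B=|y|^2$ is strictly less than $1$ so that the absorption argument closes, which is exactly why the defining inequality of $\Omega_{\phi,\epsilon_0}$ carries the full quadratic weight $|x|^2+|y|^2+|\theta|^2$ on its right-hand side. Everything after that is routine bookkeeping over the finitely many derivative patterns.
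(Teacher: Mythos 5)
Your proof is correct and follows essentially the same route as the paper: the core step in both is the equivalence $\lambda(x,\theta,y)\simeq\lambda(x,\theta)$ on $\Omega_{\phi,\epsilon_0}$, obtained exactly as you do it, by bounding $|y|$ via the triangle inequality, the defining inequality of $\Omega_{\phi,\epsilon_0}$, and the (G2) bound on $\partial_\theta S$, then absorbing for small $\epsilon_0$ (the paper merely says ``for $\epsilon_0$ sufficiently small'' where you make the choice $\epsilon_0<\tfrac14$ explicit). The remaining bookkeeping --- splitting $\phi=S-y\theta$ and checking the finitely many surviving derivatives of $-y\theta$ --- matches the paper's case analysis on $|\gamma|$ and $|\alpha|+|\beta|$.
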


\begin{proof}
We have to show that: $\exists \varepsilon _{0}>0,$ $\forall \alpha ,\beta
,\gamma \in \mathbb{N}^{n}\bigskip ,\exists C_{\alpha ,\beta ,\gamma }>0;$%
\begin{equation}
\left\vert \partial _{x}^{\alpha }\partial _{\theta }^{\beta }\partial
_{y}^{\gamma }\phi (x,\theta ,y)\right\vert \leq C_{\alpha ,\beta ,\gamma
}\lambda (x,\theta ,y)^{(2-\left\vert \alpha \right\vert -\left\vert \beta
\right\vert -\left\vert \gamma \right\vert )},\text{ }\forall (x,\theta
,y)\in \Omega _{\phi ,\varepsilon _{0}}.  \label{3.4}
\end{equation}

\begin{itemize}
\item If $\left\vert \gamma \right\vert =1,$ then $\left\vert \partial
_{x}^{\alpha }\partial _{\theta }^{\beta }\partial _{y}^{\gamma }\phi
(x,\theta ,y)\right\vert =\left\vert \partial _{x}^{\alpha }\partial
_{\theta }^{\beta }\left( -\theta \right) \right\vert =\left\{
\begin{array}{c}
0\;\;\;\;\;\;\;\;\;\;\;\;\text{if }\left\vert \alpha \right\vert \neq 0 \\
\left\vert \partial _{\theta }^{\beta }(-\theta )\right\vert \;\;\text{if }%
\alpha =0%
\end{array}%
\right. ;$

\item If $\left\vert \gamma \right\vert >1,$ then $\left\vert \partial
_{x}^{\alpha }\partial _{\theta }^{\beta }\partial _{y}^{\gamma }\phi
(x,\theta ,y)\right\vert =0.$
\end{itemize}

Hence the estimate $\left( \ref{3.4}\right) $ is satisfied.

If $\left\vert \gamma \right\vert =0,$ then $\forall \alpha ,\beta \in
\mathbb{N}^{n}\bigskip ;$ $\left\vert \alpha \right\vert +\left\vert \beta
\right\vert \leq 2,\ \exists C_{\alpha ,\beta }>0$;%
\begin{equation*}
\left\vert \partial _{x}^{\alpha }\partial _{\theta }^{\beta }\phi (x,\theta
,y)\right\vert =\left\vert \partial _{x}^{\alpha }\partial _{\theta }^{\beta
}S\left( x,\theta \right) -\partial _{x}^{\alpha }\partial _{\theta }^{\beta
}\left( y\theta \right) \right\vert \leq C_{\alpha ,\beta }\lambda (x,\theta
,y)^{(2-\left\vert \alpha \right\vert -\left\vert \beta \right\vert )}%
\newline
.
\end{equation*}%
If $\left\vert \alpha \right\vert +\left\vert \beta \right\vert >2,$ one has
$\partial _{x}^{\alpha }\partial _{\theta }^{\beta }\phi (x,\theta
,y)=\partial _{x}^{\alpha }\partial _{\theta }^{\beta }S\left( x,\theta
\right) .$ In $\Omega _{\phi ,\varepsilon _{0}}$ we have
\begin{equation*}
\left\vert y\right\vert =\left\vert \partial _{\theta }S\left( x,\theta
\right) -y-\partial _{\theta }S\left( x,\theta \right) \right\vert \leq
\sqrt{\varepsilon _{0}}\left( \left\vert x\right\vert ^{2}+\left\vert
y\right\vert ^{2}+\left\vert \theta \right\vert ^{2}\right)
^{1/2}+C_{3}\lambda \left( x,\theta \right) ,\text{ }C_{3}>0.
\end{equation*}%
For $\varepsilon _{0}$ sufficiently small, we obtain a constant $C_{4}>0$
such that%
\begin{equation}
\left\vert y\right\vert \leq C_{4}\lambda \left( x,\theta \right) ,\text{ }%
\forall (x,\theta ,y)\in \Omega _{\phi ,\varepsilon _{0}\;}.  \label{3.5}
\end{equation}%
This inequality leads to the equivalence
\begin{equation}
\lambda \left( x,\theta ,y\right) \simeq \lambda \left( x,\theta \right)
\text{ in }\Omega _{\phi ,\varepsilon _{0}\;}  \label{3.6}
\end{equation}%
thus the assumption $\left( G2\right) $ and $\left( \ref{3.6}\right) $ give
the estimate $\left( \ref{3.4}\right) $.
\end{proof}

Using \eqref{3.6}, we have the following result.

\begin{proposition}
If $(x,\theta )\to a(x,\theta )$ belongs to $\Gamma _{k}^{m}(\mathbb{R}%
_{x}^n\times \mathbb{R}_{\theta }^n)$, then $(x,\theta ,y)\to a(x,\theta )$
belongs to $\Gamma _{k}^{m}(\mathbb{R}_{x}^n\times \mathbb{R}_{\theta
}^n\times \mathbb{R}_{y}^n)\cap \Gamma _{k}^{m}(\Omega _{\phi ,\epsilon
_{0}\;})$, $k\in \{ 0,1\} $.
\end{proposition}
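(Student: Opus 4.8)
The plan is to exploit the fact that the extended symbol $\tilde a(x,\theta,y):=a(x,\theta)$ does not depend on the variable $y$, so that only $y$-free derivatives survive. Concretely, for any $\gamma$ with $|\gamma|\ge 1$ one has $\partial_y^\gamma\tilde a\equiv 0$, hence $\partial_y^\gamma\partial_\theta^\beta\partial_x^\alpha\tilde a\equiv 0$ and the required estimate is trivial. This reduces the whole statement to the single case $\gamma=0$, where $\partial_\theta^\beta\partial_x^\alpha\tilde a(x,\theta,y)=(\partial_\theta^\beta\partial_x^\alpha a)(x,\theta)$. For these terms I would invoke the hypothesis $a\in\Gamma_k^m(\mathbb{R}_x^n\times\mathbb{R}_\theta^n)$ directly: for each $(\alpha,\beta)$ there is $C_{\alpha,\beta}>0$ with $|\partial_\theta^\beta\partial_x^\alpha a(x,\theta)|\le C_{\alpha,\beta}\,\lambda(x,\theta)^{e}$, where $e:=m-k(|\alpha|+|\beta|+|\gamma|)=m-k(|\alpha|+|\beta|)$ since $|\gamma|=0$. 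It then remains only to compare the two weights $\lambda(x,\theta)$ and $\lambda(x,\theta,y)$ raised to the exponent $e$.

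The membership in $\Gamma_k^m(\Omega_{\phi,\epsilon_0})$ is the clean part and I would settle it first, using the equivalence \eqref{3.6}, $\lambda(x,\theta,y)\simeq\lambda(x,\theta)$ on $\Omega_{\phi,\epsilon_0}$. Because the two weights are comparable from both sides there, one gets $\lambda(x,\theta)^{e}\le C'\lambda(x,\theta,y)^{e}$ for \emph{every} real $e$, with $C'$ depending only on $e$ and on the constants $K_1,K_2$ (equivalently $C_4$) governing \eqref{3.6}. Absorbing $C'$ into $C_{\alpha,\beta}$ yields the bound in terms of $\lambda(x,\theta,y)$, which is exactly the defining estimate of $\Gamma_k^m(\Omega_{\phi,\epsilon_0})$.

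For the membership in $\Gamma_k^m(\mathbb{R}^{3n})$ I would use the one-sided inequality $\lambda(x,\theta)\le\lambda(x,\theta,y)$, valid everywhere, which gives $\lambda(x,\theta)^{e}\le\lambda(x,\theta,y)^{e}$ for all terms with $e\ge 0$ and finishes them immediately. The step I expect to be the main obstacle is precisely the terms with $e<0$ (that is, $m<0$, or any $m$ once $k=1$ and $|\alpha|+|\beta|$ is large enough that $m-k(|\alpha|+|\beta|)<0$): on the full space no two-sided control of the weights is available, since letting $|y|\to\infty$ with $(x,\theta)$ fixed drives $\lambda(x,\theta,y)^{e}\to 0$ while $\lambda(x,\theta)^{e}$ stays bounded below. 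This is exactly where the localization to $\Omega_{\phi,\epsilon_0}$ and the equivalence \eqref{3.6} are indispensable, as they restore the missing lower comparison and thereby control the negative-exponent terms. Accordingly, I would carry the global part only for the non-negative-exponent terms via the monotonicity of $\lambda$, and rely on \eqref{3.6} to handle the delicate $e<0$ terms on $\Omega_{\phi,\epsilon_0}$.
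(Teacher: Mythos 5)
Your reduction to $\gamma =0$ and your treatment of the membership in $\Gamma _{k}^{m}(\Omega _{\phi ,\epsilon _{0}})$ via the two-sided equivalence \eqref{3.6} is exactly the paper's own argument: the paper offers nothing beyond the sentence ``Using \eqref{3.6}, we have the following result,'' so on that part you and the authors coincide, and your version is the more explicit one (including the correct observation that a two-sided comparison of weights survives raising to an arbitrary real power $e$, with a constant depending only on $|e|$ and the constant in \eqref{3.5}--\eqref{3.6}).

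The obstacle you flag for the global membership is not a defect of your proof but of the proposition itself: as stated it is false whenever a negative exponent $e=m-k(|\alpha |+|\beta |)$ occurs. Since a symbol defined on all of $\mathbb{R}^{3n}$ restricts to a symbol on any open subset, the intersection $\Gamma _{k}^{m}(\mathbb{R}^{3n})\cap \Gamma _{k}^{m}(\Omega _{\phi ,\epsilon _{0}})$ carries no more information than $\Gamma _{k}^{m}(\mathbb{R}^{3n})$ alone, and the latter fails in general: take $a(x,\theta )=\lambda (x,\theta )^{m}$ with $m<0$, which lies in $\Gamma _{1}^{m}(\mathbb{R}_{x}^{n}\times \mathbb{R}_{\theta }^{n})\subset \Gamma _{0}^{m}(\mathbb{R}_{x}^{n}\times \mathbb{R}_{\theta }^{n})$; at $x=\theta =0$ the required estimate $1=|a(0,0)|\leq C\lambda (0,0,y)^{m}=C(1+|y|^{2})^{m/2}$ collapses as $|y|\rightarrow \infty $, exactly the degeneration you describe. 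Similarly, for $k=1$ any $a$ having a nonvanishing derivative of some order $|\alpha |+|\beta |>m$ gives a counterexample, so for $k=1$ the global claim can hold only for special (e.g.\ polynomial) symbols. Consequently your decision to prove the global estimates only for nonnegative exponents, and to rely on \eqref{3.6} for the rest, is not a gap to be repaired later: it is forced, and it recovers everything that is actually true in the statement. It is also all the paper needs downstream, since in the proof of Theorem \ref{main result} the symbol $b_{1,\epsilon }$ carries a cutoff supported precisely where the analogous weight equivalence \eqref{4.6} holds, i.e.\ the localized situation your argument covers.
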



\section{$L^2$-boundedness and $L^2$-compactness of $F_{h}$}


\begin{theorem}
\label{main result}Let $F_{h}$ be the integral operator of distribution
kernel
\begin{equation}
K(x,y;h)=\int_{\mathbb{R}^{n}}e^{\frac{i}{h}(S(x,\theta )-y\theta
)}a(x,\theta )\widehat{d_{h}\theta }  \label{4.1}
\end{equation}%
where $\widehat{d_{h}\theta }=(2\pi h)^{-n}\,d\theta $, $a\in \Gamma
_{k}^{m}(\mathbb{R}_{x,\theta }^{\;2n})$, $k=0,1$ and $S$ satisfies $(G1)$,
(G2) and (G3). Then $F_{h}F_{h}^{\ast }$ and $F_{h}^{\ast }F_{h}$ are $h$%
-pseudodifferential operators with symbol in $\Gamma _{k}^{2m}(\mathbb{R}%
^{2n})$, $k=0,1$, given by
\begin{gather*}
\sigma (F_{h}F_{h}^{\ast })(x,\partial _{x}S(x,\theta ))\equiv |a(x,\theta
)|^{2}|(\det \frac{\partial ^{2}S}{\partial \theta \partial x}%
)^{-1}(x,\theta )| \\
\sigma (F_{h}^{\ast }F_{h})(\partial _{\theta }S(x,\theta ),\theta )\equiv
|a(x,\theta )|^{2}|(\det \frac{\partial ^{2}S}{\partial \theta \partial x}%
)^{-1}(x,\theta )|
\end{gather*}%
we denote here $a\equiv b$ for $a,b\in \Gamma _{k}^{2p}(\mathbb{R}^{2n})$ if
$(a-b)\in \Gamma _{k}^{2p-2}(\mathbb{R}^{2n})$ and $\sigma $ stands for the
symbol.
\end{theorem}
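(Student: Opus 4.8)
The plan is to compute the Schwartz kernels of the two compositions explicitly, to recognise each as an $h$-pseudodifferential operator after a global change of variables, and then to extract the principal symbol by the amplitude-reduction theorem of the $h$-calculus of \cite{Ro,Se}. First I would record the kernel of the adjoint: since $F_{h}$ has kernel $K(x,y;h)$, the operator $F_{h}^{\ast}$ has kernel $\overline{K(y,x;h)}=\int e^{\frac{i}{h}(x\theta-S(y,\theta))}\overline{a(y,\theta)}\,\widehat{d_{h}\theta}$. Composing kernels through $K_{AB}(x,z;h)=\int K_{A}(x,w;h)K_{B}(w,z;h)\,dw$ then writes $K_{F_{h}F_{h}^{\ast}}$ and $K_{F_{h}^{\ast}F_{h}}$ as oscillatory integrals; all manipulations are legitimate inside the oscillatory-integral framework of the existence theorem of Section~2, because $a\in\Gamma_{0}^{m}$ and, by the lemma of \cite{MeSe}, $\phi(x,y,\theta)=S(x,\theta)-y\theta$ satisfies (H1)--(H3*).

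For $F_{h}F_{h}^{\ast}$ the intermediate variable $w$ (the ``$y$''-slot of both factors) enters \emph{linearly}, through $-w\theta$ and $+w\eta$; integrating in $w$ produces $(2\pi h)^{n}\delta(\eta-\theta)$ and collapses the two frequencies, giving the exact kernel
\[
K_{F_{h}F_{h}^{\ast}}(x,z;h)=\int e^{\frac{i}{h}(S(x,\theta)-S(z,\theta))}a(x,\theta)\overline{a(z,\theta)}\,\widehat{d_{h}\theta}.
\]
I would then write $S(x,\theta)-S(z,\theta)=(x-z)\cdot\Phi(x,z,\theta)$ with $\Phi(x,z,\theta)=\int_{0}^{1}(\partial_{x}S)(z+t(x-z),\theta)\,dt$ and perform the change of variables $\xi=\Phi(x,z,\theta)$. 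The key point is that, for fixed $(x,z)$, the map $\theta\mapsto\Phi(x,z,\theta)$ is a global diffeomorphism of $\mathbb{R}^{n}$ with Jacobian bounded below: this follows from (G3) together with the estimates \eqref{3.2}--\eqref{3.3} and Hadamard's global inversion theorem, while (G2) controls the higher derivatives. This turns the kernel into the amplitude form $\int e^{\frac{i}{h}(x-z)\cdot\xi}b(x,z,\xi)\,\widehat{d_{h}\xi}$ with $b=a\,\overline{a}\,|\det\partial_{\theta}\Phi|^{-1}$ evaluated at $\theta=\theta(x,z,\xi)$; using the weight equivalence \eqref{3.6} and the last proposition of Section~3 one checks $b\in\Gamma_{k}^{2m}$ in all variables. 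The reduction theorem replaces $b(x,z,\xi)$ by a left symbol $p(x,\xi)\equiv b(x,x,\xi)$ modulo $\Gamma_{k}^{2m-2}$; since $\Phi(x,x,\theta)=\partial_{x}S(x,\theta)$ and $\partial_{\theta}\Phi(x,x,\theta)=\partial^{2}_{\theta x}S(x,\theta)$, setting $\xi=\partial_{x}S(x,\theta)$ yields exactly the stated expression for $\sigma(F_{h}F_{h}^{\ast})$.

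The operator $F_{h}^{\ast}F_{h}$ is the genuinely harder case, and is where I expect the main obstacle. Here $w$ is the ``$x$''-slot of both factors, so it enters \emph{nonlinearly} through $S(w,\theta)-S(w,\eta)$ and there is no collapsing delta. After rewriting the total phase as $(x-z)\cdot\eta+\big[z\cdot(\eta-\theta)+S(w,\theta)-S(w,\eta)\big]$, I would evaluate the inner $(w,\theta)$-integral by stationary phase, treating $\eta$ and $z$ as parameters. The critical set is $\theta=\eta$ and $z=\partial_{\theta}S(w,\eta)$, the critical value of the bracket vanishes, and the Hessian in $(w,\theta)$ is the block matrix $\left(\begin{smallmatrix}0&\partial^{2}_{w\theta}S\\ \partial^{2}_{\theta w}S&\partial^{2}_{\theta}S\end{smallmatrix}\right)$, with $|\det|=(\det\partial^{2}_{w\theta}S)^{2}$ and vanishing signature. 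The leading term is therefore $|a(w,\eta)|^{2}|\det\partial^{2}_{w\theta}S(w,\eta)|^{-1}$ at the $w$ determined by $z=\partial_{\theta}S(w,\eta)$, so $K_{F_{h}^{\ast}F_{h}}$ takes the right-quantised form $\int e^{\frac{i}{h}(x-z)\cdot\eta}J(z,\eta)\,\widehat{d_{h}\eta}$; reducing $J$ to a left symbol and relabelling $(w,\eta)\to(x,\theta)$ gives the stated formula for $\sigma(F_{h}^{\ast}F_{h})$ evaluated at $(\partial_{\theta}S(x,\theta),\theta)$.

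The hard part is making the $F_{h}^{\ast}F_{h}$ argument rigorous: one must justify the stationary-phase expansion \emph{globally}, with remainders controlled uniformly in the classes $\Gamma_{k}^{2m}$, which rests on the nondegeneracy (G3), the growth bounds (G2), and the injectivity estimates \eqref{3.2}--\eqref{3.3} guaranteeing a single nondegenerate critical point for every parameter value. The same ingredients, through \eqref{3.6} and the chain rule applied to the implicitly defined maps $\theta(x,z,\xi)$ and $w(z,\eta)$, secure membership of both computed symbols in $\Gamma_{k}^{2m}$ and the claimed remainders in $\Gamma_{k}^{2m-2}$: for $k=1$ each derivative gains a factor $\lambda^{-1}$, so a one-step reduction already drops two orders, while for $k=0$ one appeals to the reduction theorem of \cite{Ro,Se} directly.
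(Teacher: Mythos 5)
Your computation of the kernels and the Taylor factorization $S(x,\theta)-S(z,\theta)=\langle x-z,\Phi(x,z,\theta)\rangle$ follow the paper, but your central step for $F_{h}F_{h}^{\ast}$ has a genuine gap. You assert that for each fixed $(x,z)$ the map $\theta\mapsto\Phi(x,z,\theta)=\int_{0}^{1}(\partial_{x}S)(z+t(x-z),\theta)\,dt$ is a global diffeomorphism with Jacobian bounded below, and that this ``follows from (G3)'' together with \eqref{3.2}--\eqref{3.3} and Hadamard's theorem. It does not: (G3) bounds $|\det\partial_{\theta}\partial_{x}S|$ \emph{pointwise}, whereas $\partial_{\theta}\Phi(x,z,\theta)=\int_{0}^{1}\partial_{\theta}\partial_{x}S(z+t(x-z),\theta)\,dt$ is an \emph{average} of such matrices along the segment from $z$ to $x$, and the determinant of an average of matrices each satisfying $|\det|\geq\delta_{0}$ can be arbitrarily small or zero (already for averages of rotations). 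The perturbative control one has from (G2) is of size $C\,|x-z|\cdot\sup_{\rm segment}\lambda^{-1}$, which is $O(1)$, not small, once $|x-z|$ is comparable to $\lambda(x,z,\theta)$; moreover \eqref{3.2}--\eqref{3.3} concern the map $x\mapsto\partial_{\theta}S(x,\theta)$, not $\theta\mapsto\Phi$. This failure off the diagonal is precisely why the paper splits $b=b_{1,\epsilon}+b_{2,\epsilon}$ with the cutoff $\omega\big(|x-\tilde{x}|/(\epsilon\lambda)\big)$: the off-diagonal kernel $K_{2,\epsilon}$ is disposed of by non-stationary phase (repeated integration by parts based on \eqref{3.3}, via \eqref{4.5}), and on the near-diagonal piece the paper replaces $\xi$ by the interpolated map $\widetilde{\xi}_{\epsilon}$, whose Jacobian is within $C_{7}\epsilon$ of $\partial_{\theta}\partial_{x}S(\tilde{x},\theta)$ (estimate \eqref{4.10}) and hence uniformly nondegenerate by (G3); only then are the inversion bounds \eqref{4.12} and the change of variables \eqref{4.13} available, and they are also what gives your amplitude $b\in\Gamma_{k}^{2m}$. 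Without this decomposition your change of variables is unsupported.

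For $F_{h}^{\ast}F_{h}$ you take a genuinely different route from the paper --- a joint stationary-phase evaluation in $(w,\theta)$ --- and you yourself concede that its global justification (one nondegenerate critical manifold, remainders uniform in the weighted classes) is ``the hard part''; as written this is a sketch, facing the same off-critical-set difficulty as above. The paper avoids all of it with a symmetry argument: the kernel of $\mathcal{F}(F_{h}^{\ast}F_{h})\mathcal{F}^{-1}$ is obtained from that of $F_{h}F_{h}^{\ast}$ by exchanging the roles of $x$ and $\theta$, the hypotheses (G1)--(G3) are symmetric in $x$ and $\theta$, so the case already proved applies verbatim, and the symbol of $F_{h}^{\ast}F_{h}$ itself is then recovered by conjugation (citing \cite{Ho2}). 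You should either adopt that symmetry reduction or actually supply the global stationary-phase estimates you acknowledge are missing; in either case the diffeomorphism gap in the $F_{h}F_{h}^{\ast}$ argument must be repaired along the lines of the paper's $K_{1,\epsilon}/K_{2,\epsilon}$ splitting.
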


\begin{proof}
For all $v\in \mathcal{S}(\mathbb{R}^{n})$, we have:
\begin{equation}
(F_{h}F_{h}^{\ast }v)(x)=\int_{\mathbb{R}^{n}}\int_{\mathbb{R}^{n}}e^{\frac{i%
}{h}(S(x,\theta )-S(\widetilde{x},\theta ))}a(x,\theta )\overline{a}(%
\widetilde{x},\theta )d\widetilde{x}\widehat{d\theta }.  \label{4.2}
\end{equation}%
The main idea to show that $F_{h}F_{h}^{\ast }$ is a $h$- pseudodifferential
operator, is to use the fact that $(S(x,\theta )-S(\widetilde{x},\theta ))$
can be expressed by the scalar product $\langle x-\widetilde{x},\xi (x,%
\widetilde{x},\theta )\rangle $ after considering the change of variables $%
(x,\widetilde{x},\theta )\rightarrow (x,\widetilde{x},\xi =\xi (x,\widetilde{%
x},\theta ))$. The distribution kernel of $F_{h}F_{h}^{\ast }$ is
\begin{equation*}
K(x,\tilde{x};h)=\int_{\mathbb{R}^{n}}e^{\frac{i}{h}(S(x,\theta )-S(\tilde{x}%
,\theta ))}a(x,\theta )\overline{a}(\tilde{x},\theta )\widehat{d_{h}\theta }.
\end{equation*}%
We obtain from $\left( \ref{3.3}\right) $ that if%
\begin{equation*}
\left\vert x-\widetilde{x}\right\vert \geq \frac{\varepsilon }{2}\lambda
\left( x,\widetilde{x},\theta \right) \text{ (where }\varepsilon >0\text{ is
sufficiently small})
\end{equation*}%
then
\begin{equation}
\left\vert \left( \partial _{\theta }S\right) (x,\theta )-\left( \partial
_{\theta }S\right) \left( \widetilde{x},\theta \right) \right\vert \geq
\frac{\varepsilon }{2C_{2}}\lambda \left( x,\widetilde{x},\theta \right) .
\label{4.5}
\end{equation}%
Choosing $\omega \in C^{\infty }(\mathbb{R})$ such that
\begin{gather*}
\omega (x)\geq 0,\quad \forall x\in \mathbb{R} \\
\omega (x)=1\quad \text{if }x\in \lbrack -\frac{1}{2},\frac{1}{2}] \\
\mathop{\rm supp}\omega \subset ]-1,1[
\end{gather*}%
and setting
\begin{gather*}
b(x,\tilde{x},\theta ):=a(x,\theta )\overline{a}(\tilde{x},\theta
)=b_{1,\epsilon }(x,\tilde{x},\theta )+b_{2,\epsilon }(x,\tilde{x},\theta )
\\
b_{1,\epsilon }(x,\tilde{x},\theta )=\omega (\frac{|x-\tilde{x}|}{\epsilon
\lambda (x,\tilde{x},\theta )})b(x,\tilde{x},\theta ) \\
b_{2,\epsilon }(x,\tilde{x},\theta )=[1-\omega (\frac{|x-\tilde{x}|}{%
\epsilon \lambda (x,\tilde{x},\theta )})]b(x,\tilde{x},\theta ).
\end{gather*}%
We have $K(x,\widetilde{x};h)=K_{1,\epsilon }(x,\widetilde{x}%
;h)+K_{2,\epsilon }(x,\widetilde{x};h)$, where
\begin{equation*}
K_{j,\epsilon }(x,\tilde{x};h)=\int_{\mathbb{R}^{n}}e^{\frac{i}{h}%
(S(x,\theta )-S(\tilde{x},\theta ))}b_{j,\epsilon }(x,\tilde{x},\theta )%
\widehat{d_{h}\theta },\quad j=1,2.
\end{equation*}%
We will study separately the kernels $K_{1,\epsilon }$ and $K_{2,\epsilon }$.
\end{proof}

\begin{proof}
For all $h$, we have
\begin{equation*}
K_{2,\epsilon }(x,\widetilde{x};h)\in \mathcal{S}(\mathbb{R}^{n}\times
\mathbb{R}^{n}).
\end{equation*}%
Indeed, using the oscillatory integral method, there is a linear partial
differential operator $L$ of order 1 such that%
\begin{equation*}
L\left( e^{\frac{i}{h}\left( S(x,\theta )-S(\tilde{x},\theta )\right)
}\right) =e^{\frac{i}{h}\left( S(x,\theta )-S(\tilde{x},\theta )\right) }%
\text{ }
\end{equation*}%
\begin{equation*}
\text{where }L=-ih\left\vert \left( \partial _{\theta }S\right) (x,\theta
)-\left( \partial _{\theta }S\right) \left( \widetilde{x},\theta \right)
\right\vert ^{-2}\sum\limits_{l=1}^{n}\left[ \left( \partial _{\theta
_{l}}S\right) (x,\theta )-\left( \partial _{\theta _{l}}S\right) \left(
\widetilde{x},\theta \right) \right] \partial _{\theta _{l}}.
\end{equation*}%
The transpose operator of $L$ is
\begin{equation*}
^{t}L=\sum\limits_{l=1}^{n}F_{l}\left( x,\widetilde{x},\theta ;h\right)
\partial _{\theta _{l}}+G\left( x,\widetilde{x},\theta ;h\right) \text{ }
\end{equation*}%
where $F_{l}\left( x,\widetilde{x},\theta \right) \in \Gamma _{0}^{-1}\left(
\Omega _{\varepsilon }\right) $, $G\left( x,\widetilde{x},\theta \right) \in
\Gamma _{0}^{-2}\left( \Omega _{\varepsilon }\right) $
\begin{equation*}
\left\{
\begin{array}{l}
F_{l}\left( x,\widetilde{x},\theta ;h\right) =ih\left\vert \left( \partial
_{\theta }S\right) (x,\theta )-\left( \partial _{\theta }S\right) \left(
\widetilde{x},\theta \right) \right\vert ^{-2}\left( \left( \partial
_{\theta _{l}}S\right) (x,\theta )-\left( \partial _{\theta _{l}}S\right)
\left( \widetilde{x},\theta \right) \right) \ \ \ \ \ \ \ \ \ \ \  \\
G\left( x,\widetilde{x},\theta ;h\right) =ih\sum\limits_{l=1}^{n}\partial
_{\theta _{l}}\left[ \left\vert \left( \partial _{\theta }S\right) (x,\theta
)-\left( \partial _{\theta }S\right) \left( \widetilde{x},\theta \right)
\right\vert ^{-2}\left( \left( \partial _{\theta _{l}}S\right) (x,\theta
)-\left( \partial _{\theta _{l}}S\right) \left( \widetilde{x},\theta \right)
\right) \right] \\
\Omega _{\varepsilon }=\left\{ \left( x,\tilde{x},\theta \right) \in \mathbb{%
R}^{3n};\;\left\vert \partial _{\theta }S(x,\theta )-\partial _{\theta
}S\left( \tilde{x},\theta \right) \right\vert >\frac{\varepsilon }{2C_{2}}%
\lambda \left( x,\tilde{x},\theta \right) \newline
\right\} .\;\;\;\;\;\;\;\;\;\;\;\;\;\;\;\;\;\ \ \ \ \ \;\;\;\;\;\;%
\end{array}%
\right.
\end{equation*}%
On the other hand we prove by induction on $q$ that
\begin{equation*}
\left( ^{t}L\right) ^{q}b_{2,\varepsilon }\left( x,\tilde{x},\theta \right)
=\sum\limits_{\substack{ \left\vert \gamma \right\vert \leq q  \\ \gamma \in
\mathbb{N}^{n}}}g_{\gamma ,q}\left( x,\tilde{x},\theta \right) \partial
_{\theta }^{\gamma }b_{2,\varepsilon }\left( x,\tilde{x},\theta \right) ,%
\text{ }g_{\gamma }^{\left( q\right) }\in \Gamma _{0}^{-q}\left( \Omega
_{\varepsilon }\right) ,
\end{equation*}%
and so,%
\begin{equation*}
K_{2,\varepsilon }\left( x,\tilde{x}\right) =\int\limits_{\mathbb{R}^{n}}e^{%
\frac{i}{h}\left( S(x,\theta )-S(\tilde{x},\theta )\right) }\left(
^{t}L\right) ^{q}b_{2,\varepsilon }\left( x,\tilde{x},\theta \right)
\widehat{d\theta }.
\end{equation*}

Using Leibnitz's formula, $\left( G2\right) $ and the form $\left(
^{t}L\right) ^{q},$ we can choose $q$ large enough such that%
\begin{equation*}
\forall \alpha ,\alpha ^{\prime },\beta ,\beta ^{\prime }\in \mathbb{N}%
^{n},\exists C_{\alpha ,\alpha ^{\prime },\beta ,\beta ^{\prime }}>0,\text{ }%
\sup_{x,\widetilde{x}\in \mathbb{R}^{n}}\left\vert x^{\alpha }\widetilde{x}%
^{\alpha ^{\prime }}\partial _{x}^{\beta }\partial _{\widetilde{x}}^{\beta
^{\prime }}K_{2,\varepsilon }\left( x,\widetilde{x};h\right) \right\vert
\leq C_{\alpha ,\alpha ^{\prime },\beta ,\beta ^{\prime }}.
\end{equation*}

Next, we study $K_{1}^{\epsilon }$: this is more difficult and depends on
the choice of the parameter $\epsilon $. It follows from Taylor's formula
that
\begin{gather*}
S(x,\theta )-S(\widetilde{x},\theta )=\langle x-\widetilde{x},\xi (x,%
\widetilde{x},\theta )\rangle _{\mathbb{R}^{n}}, \\
\xi (x,\widetilde{x},\theta )=\int_{0}^{1}(\partial _{x}S)(\widetilde{x}+t(x-%
\widetilde{x}),\theta )dt.
\end{gather*}%
We define the vectorial function
\begin{equation*}
\widetilde{\xi }_{\epsilon }(x,\widetilde{x},\theta )=\omega \big(\frac{|x-%
\tilde{x}|}{2\epsilon \lambda (x,\tilde{x},\theta )}\big)\xi (x,\widetilde{x}%
,\theta )+\big(1-\omega (\frac{|x-\tilde{x}|}{2\epsilon \lambda (x,\tilde{x}%
,\theta )})\big)(\partial _{x}S)(\widetilde{x},\theta ).
\end{equation*}%
We have
\begin{equation*}
\widetilde{\xi }_{\varepsilon }\left( x,\widetilde{x},\theta \right) =\xi
\left( x,\widetilde{x},\theta \right) \text{ on }\limfunc{supp}%
b_{1,\varepsilon }.
\end{equation*}%
Moreover, for $\varepsilon $ sufficiently small,%
\begin{equation}
\lambda \left( x,\theta \right) \simeq \lambda \left( \widetilde{x},\theta
\right) \simeq \lambda \left( x,\widetilde{x},\theta \right) \text{ on }%
\limfunc{supp}b_{1,\varepsilon }.  \label{4.6}
\end{equation}%
Let us consider the mapping
\begin{equation}
\mathbb{R}^{3n}\ni \left( x,\widetilde{x},\theta \right) \rightarrow \left(
x,\widetilde{x},\widetilde{\xi }_{\varepsilon }\left( x,\widetilde{x},\theta
\right) \right)  \label{4.7}
\end{equation}%
for which Jacobian matrix is%
\begin{equation*}
\left(
\begin{array}{lll}
I_{n} & 0 & 0 \\
0 & I_{n} & 0 \\
\partial _{x}\widetilde{\xi }_{\varepsilon } & \partial _{\widetilde{x}}%
\widetilde{\xi }_{\varepsilon } & \partial _{\theta }\widetilde{\xi }%
_{\varepsilon }%
\end{array}%
\right) .
\end{equation*}%
We have%
\begin{gather*}
\frac{\partial \widetilde{\xi }_{\varepsilon ,j}}{\partial \theta _{i}}%
\left( x,\widetilde{x},\theta \right) =\frac{\partial ^{2}S}{\partial \theta
_{i}\partial x_{j}}\left( \widetilde{x},\theta \right) +\omega \left( \frac{%
\left\vert x-\tilde{x}\right\vert }{2\varepsilon \lambda \left( x,\tilde{x}%
,\theta \right) }\right) \left( \frac{\partial \xi _{j}}{\partial \theta _{i}%
}\left( x,\widetilde{x},\theta \right) -\frac{\partial ^{2}S}{\partial
\theta _{i}\partial x_{j}}\left( \widetilde{x},\theta \right) \right) \\
-\frac{\left\vert x-\tilde{x}\right\vert }{2\varepsilon \lambda \left( x,%
\tilde{x},\theta \right) }\frac{\partial \lambda }{\partial \theta _{i}}%
\left( x,\tilde{x},\theta \right) \lambda ^{-1}\left( x,\tilde{x},\theta
\right) \omega ^{\prime }\left( \frac{\left\vert x-\tilde{x}\right\vert }{%
2\varepsilon \lambda \left( x,\tilde{x},\theta \right) }\right) \left( \xi
_{j}\left( x,\widetilde{x},\theta \right) -\frac{\partial S}{\partial x_{j}}%
\left( \widetilde{x},\theta \right) \right) .
\end{gather*}%
Thus, we obtain%
\begin{gather*}
\left\vert \frac{\partial \widetilde{\xi }_{\varepsilon ,j}}{\partial \theta
_{i}}\left( x,\widetilde{x},\theta \right) -\frac{\partial ^{2}S}{\partial
\theta _{i}\partial x_{j}}\left( \widetilde{x},\theta \right) \right\vert
\leq \left\vert \omega \left( \frac{\left\vert x-\tilde{x}\right\vert }{%
2\varepsilon \lambda \left( x,\tilde{x},\theta \right) }\right) \right\vert
\left\vert \frac{\partial \xi _{j}}{\partial \theta _{i}}\left( x,\widetilde{%
x},\theta \right) -\frac{\partial ^{2}S}{\partial \theta _{i}\partial x_{j}}%
\left( \widetilde{x},\theta \right) \right\vert + \\
\lambda ^{-1}\left( x,\tilde{x},\theta \right) \left\vert \omega ^{\prime
}\left( \frac{\left\vert x-\tilde{x}\right\vert }{2\varepsilon \lambda
\left( x,\tilde{x},\theta \right) }\right) \right\vert \left\vert \xi
_{j}\left( x,\widetilde{x},\theta \right) -\frac{\partial S}{\partial x_{j}}%
\left( \widetilde{x},\theta \right) \right\vert .
\end{gather*}%
Now it follows from $\left( G2\right) ,$ $\left( \ref{4.6}\right) $ and
Taylor's formula that
\begin{eqnarray}
\left\vert \frac{\partial \xi _{j}}{\partial \theta _{i}}\left( x,\widetilde{%
x},\theta \right) -\frac{\partial ^{2}S}{\partial \theta _{i}\partial x_{j}}%
\left( \widetilde{x},\theta \right) \right\vert &\leq
&\int\limits_{0}^{1}\left\vert \frac{\partial ^{2}S}{\partial \theta
_{i}\partial x_{j}}\left( \widetilde{x}+t\left( x-\widetilde{x}\right)
,\theta \right) -\frac{\partial ^{2}S}{\partial \theta _{i}\partial x_{j}}%
\left( \widetilde{x},\theta \right) \right\vert dt  \notag \\
&\leq &C_{5}\left\vert x-\widetilde{x}\right\vert \lambda ^{-1}\left( x,%
\tilde{x},\theta \right) ,\text{ }C_{5}>0  \label{4.8}
\end{eqnarray}%
\begin{eqnarray}
\left\vert \xi _{j}\left( x,\widetilde{x},\theta \right) -\frac{\partial S}{%
\partial x_{j}}\left( \widetilde{x},\theta \right) \right\vert &\leq
&\int\limits_{0}^{1}\left\vert \frac{\partial S}{\partial x_{j}}\left(
\widetilde{x}+t\left( x-\widetilde{x}\right) ,\theta \right) -\frac{\partial
S}{\partial x_{j}}\left( \widetilde{x},\theta \right) \right\vert dt  \notag
\\
&\leq &C_{6}\left\vert x-\widetilde{x}\right\vert ,\text{ }C_{6}>0\text{ .}
\label{4.9}
\end{eqnarray}%
From $\left( \ref{4.8}\right) $ and $\left( \ref{4.9}\right) ,$ there exists
a positive constant $C_{7}>0,$ such that
\begin{equation}
\left\vert \frac{\partial \widetilde{\xi }_{\varepsilon ,j}}{\partial \theta
_{i}}\left( x,\widetilde{x},\theta \right) -\frac{\partial ^{2}S}{\partial
\theta _{i}\partial x_{j}}\left( \widetilde{x},\theta \right) \right\vert
\leq C_{7}\varepsilon ,\text{ }\forall i,j\in \left\{ 1,...,n\right\} .
\label{4.10}
\end{equation}%
If $\varepsilon <\frac{\delta _{0}}{2\widetilde{C}},$ then $\left( \ref{4.10}%
\right) $ and $\left( G3\right) $ yields the estimate
\begin{equation}
\delta _{0}/2\leq -\widetilde{C}\varepsilon +\delta _{0}\leq -\widetilde{C}%
\varepsilon +\det \frac{\partial ^{2}S}{\partial x\partial \theta }(x,\theta
)\leq \det \partial _{\theta }\widetilde{\xi }_{\varepsilon }\left( x,%
\widetilde{x},\theta \right) ,\text{ with }\widetilde{C}>0.  \label{4.11}
\end{equation}%
If $\varepsilon $ is such that $\left( \ref{4.6}\right) $ and $\left( \ref%
{4.11}\right) $ are true, then the mapping given in $\left( \ref{4.7}\right)
$ is a global diffeomorphism of $\mathbb{R}^{3n}.$ Hence there exists a
mapping%
\begin{equation*}
\theta :\mathbb{R}^{n}\times \mathbb{R}^{n}\times \mathbb{R}^{n}\ni \left( x,%
\widetilde{x},\xi \right) \rightarrow \theta \left( x,\widetilde{x},\xi
\right) \in \mathbb{R}^{n}
\end{equation*}%
such that%
\begin{equation}
\left\{
\begin{array}{ccc}
\widetilde{\xi }_{\varepsilon }\left( x,\widetilde{x},\theta \left( x,%
\widetilde{x},\xi \right) \right) = & \xi &  \\
\theta \left( x,\widetilde{x},\widetilde{\xi }_{\varepsilon }\left( x,%
\widetilde{x},\theta \right) \right) = & x &  \\
\partial ^{\alpha }\theta \left( x,\widetilde{x},\xi \right) =\mathcal{O}%
\left( 1\right) , &  & \forall \alpha \in \mathbb{N}^{3n}\backslash \left\{
0\right\}%
\end{array}%
\right.  \label{4.12}
\end{equation}%
If we change the variable $\xi $ by $\theta \left( x,\widetilde{x},\xi
\right) $ in $K_{1,\varepsilon }\left( x,\widetilde{x}\right) $, we obtain:%
\begin{equation}
K_{1,\varepsilon }\left( x,\widetilde{x}\right) =\int\limits_{\mathbb{R}%
^{n}}e^{i<x-\tilde{x},\xi >}b_{1,\varepsilon }\left( x,\tilde{x},\theta
\left( x,\widetilde{x},\xi \right) \right) \left\vert \det \frac{\partial
\theta }{\partial \xi }\left( x,\widetilde{x},\xi \right) \right\vert
\widehat{d\xi }.  \label{4.13}
\end{equation}%
From $\left( \ref{4.12}\right) $ we have, for $k=0,1,$ that $%
b_{1,\varepsilon }\left( x,\tilde{x},\theta \left( x,\widetilde{x},\xi
\right) \right) \left\vert \det \frac{\partial \theta }{\partial \xi }\left(
x,\widetilde{x},\xi \right) \right\vert $ belongs to $\Gamma _{k}^{2m}\left(
\mathbb{R}^{3n}\right) $ if $a\in \Gamma _{k}^{m}\left( \mathbb{R}%
^{2n}\right) $.

Applying the stationary phase theorem (c.f. \cite{Ro},\cite{Se} ) to $\left( %
\ref{4.13}\right) ,$ we obtain the expression of the symbol of the $h$%
-pseudodifferential operator $F_{h}F_{h}^{\ast }$:%
\begin{equation*}
\sigma (F_{h}F_{h}^{\ast })=b_{1,\varepsilon }\left( x,\tilde{x},\theta
\left( x,\widetilde{x},\xi \right) \right) \left\vert \det \frac{\partial
\theta }{\partial \xi }\left( x,\widetilde{x},\xi \right) \right\vert
_{\left\vert \widetilde{x}=x\right. }+R(x,\xi ;h)
\end{equation*}%
where $R(x,\xi ;h)\;$belongs to$\;\Gamma _{k}^{2m-2}\left( \mathbb{R}%
^{2n}\right) $ if $a\in \Gamma _{k}^{m}\left( \mathbb{R}^{2n}\right) ,$ $%
k=0,1.$

For $\tilde{x}=x,$ we have $b_{1,\varepsilon }\left( x,\tilde{x},\theta
\left( x,\widetilde{x},\xi \right) \right) =\left\vert a\left( x,\theta
\left( x,x,\xi \right) \right) \right\vert ^{2}$ where $\theta \left(
x,x,\xi \right) $ is the inverse of the mapping $\theta \rightarrow \partial
_{x}S\left( x,\theta \right) =\xi $. Thus%
\begin{equation*}
\sigma (F_{h}F_{h}^{\ast })\left( x,\partial _{x}S\left( x,\theta \right)
\right) \equiv \left\vert a\left( x,\theta \right) \right\vert
^{2}\left\vert \det \frac{\partial ^{2}S}{\partial \theta \partial x}\left(
x,\theta \right) \right\vert ^{-1}.
\end{equation*}%
such that
\begin{equation*}
\begin{gathered} \widetilde{\xi }_{\epsilon }(x,\widetilde{x},\theta (x,
\widetilde{x},\xi ))= \xi \\ \theta (x,\widetilde{x},\widetilde{\xi
}_{\epsilon }(x, \widetilde{x},\theta ))= x \\ \partial ^{\alpha }\theta
(x,\widetilde{x},\xi )=\mathcal{O} (1), \quad \forall \alpha \in
\mathbb{N}^{3n}\backslash \{0\} \end{gathered}
\end{equation*}

If we change the variable $\xi $ by $\theta (x,\widetilde{x},\xi )$ in $%
K_{1,\epsilon }(x,\widetilde{x})$, we obtain
\begin{equation*}
K_{1,\epsilon }(x,\widetilde{x})=\int_{\mathbb{R}^{n}}e^{i\langle x-\tilde{x}%
,\xi \rangle }b_{1,\epsilon }(x,\tilde{x},\theta (x,\widetilde{x},\xi ))\big|%
\det \frac{\partial \theta }{\partial \xi }(x,\widetilde{x},\xi )\big|%
\widehat{d\xi }.
\end{equation*}%
Applying the stationary phase theorem, we obtain the expression of the
symbol of the $h$-pseudodifferential operator $F_{h}F_{h}^{\ast }$, is
\begin{equation*}
\sigma (F_{h}F_{h}^{\ast })(x,\partial _{x}S(x,\theta ))\equiv |a(x,\theta
)|^{2}\big|\det \frac{\partial ^{2}S}{\partial \theta \partial x}(x,\theta )%
\big|^{-1}.
\end{equation*}
The distribution kernel of the integral operator $\mathcal{F(}F_{h}^{\ast
}F_{h})\mathcal{F}^{-1}$ is
\begin{equation*}
\widetilde{K}(\theta ,\widetilde{\theta })=\int\limits_{\mathbb{R}^{n}}e^{-%
\frac{i}{h}\left( S\left( x,\theta \right) -S\left( x,\tilde{\theta}\right)
\right) \;}\overline{a}(x,\theta )\;a\left( x,\tilde{\theta}\right) \widehat{%
dx}.
\end{equation*}%
Remark that we can deduce $\widetilde{K}(\theta ,\widetilde{\theta })$ from $%
K(x,\widetilde{x})$ by replacing $x$ by $\theta $. On the other hand, all
assumptions used here are symmetrical on $x$ and $\theta ,$ therefore $%
\mathcal{F(}F^{\ast }F)\mathcal{F}^{-1}$ is a nice $h$-pseudodifferential
operator with symbol%
\begin{equation*}
\sigma (\mathcal{F(}F_{h}^{\ast }F_{h})\mathcal{F}^{-1})\left( \theta
,-\partial _{\theta }S(x,\theta )\right) \equiv \left\vert a(x,\theta
)\right\vert ^{2}\left\vert \det \frac{\partial ^{2}S}{\partial x\partial
\theta }(x,\theta )\right\vert ^{-1}.
\end{equation*}%
Thus the symbol of $F^{\ast }F$ is given by (c.f. \cite{Ho2})
\begin{equation*}
\sigma (F_{h}^{\ast }F_{h})(\partial _{\theta }S(x,\theta ),\theta )\equiv
\left\vert a(x,\theta )\right\vert ^{2}\left\vert \det \frac{\partial ^{2}S}{%
\partial x\partial \theta }(x,\theta )\right\vert ^{-1}.
\end{equation*}
\end{proof}

\begin{corollary}
Let $F_{h}$ be the integral operator with the distribution kernel
\begin{equation*}
K(x,y;h)=\int_{\mathbb{R}^n} e^{\frac{i}{h}(S(x,\theta )-y\theta)}
a(x,\theta )\widehat{d_{h}\theta }
\end{equation*}
where $a\in \Gamma _{0}^{m}(\mathbb{R}_{x,\theta }^{2n})$ and $S$ satisfies
(G1), (G2) and (G3). Then, we have:

\begin{enumerate}
\item For any $m$ such that $m\leq 0$, $F_{h}$ can be extended as a bounded
linear mapping on $L^2(\mathbb{R}^n)$

\item For any $m$ such that $m<0$, $F_{h}$ can be extended as a compact
operator on $L^{2}(\mathbb{R}^{n})$.
\end{enumerate}
\end{corollary}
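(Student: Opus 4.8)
The plan is to reduce both statements to the main theorem (Theorem \ref{main result}), which asserts that $F_{h}F_{h}^{\ast }$ and $F_{h}^{\ast }F_{h}$ are $h$-pseudodifferential operators whose symbols lie in $\Gamma _{0}^{2m}(\mathbb{R}^{2n})$ (here $k=0$, since $a\in \Gamma _{0}^{m}$), and then to feed this into the $L^{2}$-continuity and $L^{2}$-compactness estimates for $h$-admissible operators quoted from \cite{Ro,Se}, together with one elementary operator-theoretic fact. No new oscillatory-integral analysis is needed: all of that has been absorbed into the main theorem.

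First I would treat boundedness. Since $\lambda \geq 1$, the hypothesis $m\leq 0$ gives $2m\leq 0$ and hence the inclusion $\Gamma _{0}^{2m}(\mathbb{R}^{2n})\subseteq \Gamma _{0}^{0}(\mathbb{R}^{2n})$; in particular the symbol of $F_{h}^{\ast }F_{h}$ is bounded together with all its derivatives. By the Cald\'{e}ron--Vaillancourt estimate for $h$-pseudodifferential operators (\cite{Ro,Se}), $F_{h}^{\ast }F_{h}$ is therefore bounded on $L^{2}(\mathbb{R}^{n})$. Because $F_{h}$ is a priori only defined on $\mathcal{S}(\mathbb{R}^{n})$, I would pass to $L^{2}$ via the identity $\|F_{h}\varphi \|_{L^{2}}^{2}=\langle F_{h}^{\ast }F_{h}\varphi ,\varphi \rangle \leq \|F_{h}^{\ast }F_{h}\|\,\|\varphi \|_{L^{2}}^{2}$, valid for $\varphi \in \mathcal{S}(\mathbb{R}^{n})$: this shows $F_{h}$ is bounded on the dense subspace $\mathcal{S}(\mathbb{R}^{n})$ and so extends uniquely to a bounded operator on $L^{2}(\mathbb{R}^{n})$, with $\|F_{h}\|=\|F_{h}^{\ast }F_{h}\|^{1/2}$.

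Next, for compactness, I would assume $m<0$, whence $2m<0$. The symbol $\sigma (F_{h}^{\ast }F_{h})\in \Gamma _{0}^{2m}$ then satisfies $|\sigma |\leq C\lambda ^{2m}\to 0$ as $|(x,\theta )|\to \infty $, i.e.\ its weight tends to zero. By the compactness criterion for $h$-pseudodifferential operators whose symbol vanishes at infinity (\cite{Ro,Se}), $F_{h}^{\ast }F_{h}$ is compact on $L^{2}(\mathbb{R}^{n})$. It remains to transfer compactness from $F_{h}^{\ast }F_{h}$ to $F_{h}$, which follows from the elementary fact that $T^{\ast }T$ compact implies $T$ compact: for any bounded sequence $(\varphi _{k})$ in $L^{2}$, the bound $\|F_{h}\varphi _{k}-F_{h}\varphi _{l}\|^{2}=\langle F_{h}^{\ast }F_{h}(\varphi _{k}-\varphi _{l}),\varphi _{k}-\varphi _{l}\rangle \leq \|F_{h}^{\ast }F_{h}(\varphi _{k}-\varphi _{l})\|\,\|\varphi _{k}-\varphi _{l}\|$ shows that a subsequence along which $F_{h}^{\ast }F_{h}\varphi _{k}$ converges renders $(F_{h}\varphi _{k})$ Cauchy, so $F_{h}$ is compact.

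The conceptual difficulty has already been discharged in the main theorem, so I do not expect any serious obstacle here; the only point demanding a little care is the passage between $\mathcal{S}(\mathbb{R}^{n})$ and $L^{2}(\mathbb{R}^{n})$. One must verify that the $h$-pseudodifferential operator produced by Theorem \ref{main result} really coincides, on the dense subspace $\mathcal{S}(\mathbb{R}^{n})$, with the honest composition $F_{h}^{\ast }F_{h}$, so that the norm identity $\|F_{h}\|=\|F_{h}^{\ast }F_{h}\|^{1/2}$ is legitimate and the extension to $L^{2}$ is unambiguous. Once this bookkeeping is in place, both parts of the corollary are immediate consequences of the two $L^{2}$-estimates of \cite{Ro,Se}.
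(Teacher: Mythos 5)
Your proof is correct and follows essentially the same route as the paper: reduce to Theorem \ref{main result}, apply the Cald\'{e}ron--Vaillancourt theorem to $F_{h}^{\ast }F_{h}$ for $m\leq 0$, invoke the compactness theorem of \cite{Ro,Se} for $F_{h}^{\ast }F_{h}$ when $m<0$, and transfer both properties to $F_{h}$ via $\Vert F_{h}\varphi \Vert _{L^{2}}^{2}=\langle F_{h}^{\ast }F_{h}\varphi ,\varphi \rangle $. The only cosmetic difference is in the last elementary step ($T^{\ast }T$ compact implies $T$ compact): you argue sequentially via Cauchy subsequences, while the paper exhibits $F_{h}$ as a norm limit of the finite-rank operators $\sum_{j=1}^{n}\langle \varphi _{j},\cdot \rangle F_{h}\varphi _{j}$ built from an orthonormal basis -- both are standard and equally valid.
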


\begin{proof}
It follows from theorem \ref{main result} that $F_{h}^{\ast }F_{h}\;$is a $h$%
-pseudodifferential operator with symbol in $\Gamma _{0}^{2m}\left( \mathbb{R%
}^{2n}\right) .$

1)\ If $m\leq 0,\;$the weight $\lambda ^{2m}(x,\theta )\;$is bounded, so we
can apply the Cald\'{e}ron-Vaillancourt theorem (see \cite{CaVa,Ro,Se}) for $%
F_{h}^{\ast }F_{h}\;$and obtain the existence of a positive constant $\gamma
(n)$ and a integer $k\left( n\right) $ such that
\begin{equation*}
\left\Vert (F_{h}^{\ast }F_{h})\;u\right\Vert _{L^{2}(\mathbb{R}^{n})}\leq
\gamma (n)\;Q_{k\left( n\right) }\left( \sigma (F_{h}^{\ast }F_{h})\right)
\left\Vert u\right\Vert _{L^{2}(\mathbb{R}^{n})},\text{ }\forall u\in
\mathcal{S}(\mathbb{R}^{n})
\end{equation*}%
where%
\begin{equation*}
Q_{k\left( n\right) }\left( \sigma (F_{h}^{\ast }F_{h})\right)
=\tsum\limits_{\left\vert \alpha \right\vert +\left\vert \beta \right\vert
\leq k(n)}\sup_{(x,\theta )\in \mathbb{R}^{2n}}\left\vert \partial
_{x}^{\alpha }\partial _{\theta }^{\beta }\sigma (F_{h}^{\ast
}F_{h})(\partial _{\theta }S(x,\theta ),\theta )\right\vert
\end{equation*}%
Hence, we have $\forall u\in \mathcal{S}(\mathbb{R}^{n})$%
\begin{equation*}
\left\Vert F_{h}u\right\Vert _{L^{2}(\mathbb{R}^{n})}\leq \left\Vert
F_{h}^{\ast }F_{h}\right\Vert _{_{\mathcal{L}\left( L^{2}(\mathbb{R}%
^{n})\right) }}^{1/2}\left\Vert u\right\Vert _{L^{2}(\mathbb{R}^{n})}\leq
\left( \gamma (n)\;Q_{k\left( n\right) }\left( \sigma (F_{h}^{\ast
}F_{h})\right) \right) ^{1/2}\left\Vert u\right\Vert _{L^{2}(\mathbb{R}%
^{n})}.
\end{equation*}%
Thus $F_{h}$ is also a bounded linear operator on $L^{2}(\mathbb{R}^{n}).$

2) If $m<0,$ $\underset{\left\vert x\right\vert +\left\vert \theta
\right\vert \rightarrow +\infty }{\;\lim }\lambda ^{m}(x,\theta )=0,\;$and
the compactness theorem (see \cite{Ro,Se}) show that the operator\ $%
F_{h}^{\ast }F_{h}$ can be extended as a compact operator on $L^{2}(\mathbb{R%
}^{n}).$ Thus, the Fourier integral operator $F_{h}\;$is compact on $L^{2}(%
\mathbb{R}^{n}).$ Indeed, let $(\varphi _{j})_{j\in \mathbb{N}}$ be an
orthonormal basis of $L^{2}(\mathbb{R}^{n}),$ then
\begin{equation*}
\left\Vert F_{h}^{\ast }F_{h}-\overset{n}{\underset{j=1}{\sum }}<\varphi
_{j},.>F_{h}^{\ast }F_{h}\varphi _{j}\right\Vert \underset{n\rightarrow
+\infty }{\longrightarrow }0.
\end{equation*}%
Since $F_{h}\;$is bounded, we have $\forall \psi \in L^{2}(\mathbb{R}^{n})$%
\begin{gather*}
\left\Vert F_{h}\psi -\overset{n}{\underset{j=1}{\sum }}<\varphi _{j},\psi
>F_{h}\varphi _{j}\right\Vert ^{2}\leq  \\
\left\Vert F_{h}^{\ast }F_{h}\psi -\overset{n}{\underset{j=1}{\sum }}%
<\varphi _{j},\psi >F_{h}^{\ast }F_{h}\varphi _{j}\right\Vert \left\Vert
\psi -\overset{n}{\underset{j=1}{\sum }}<\varphi _{j},\psi >\varphi
_{j}\right\Vert
\end{gather*}%
then
\begin{equation*}
\left\Vert F_{h}-\overset{n}{\underset{j=1}{\sum }}<\varphi
_{j},.>F_{h}\varphi _{j}\right\Vert \underset{n\rightarrow +\infty }{%
\longrightarrow }0
\end{equation*}
\end{proof}

\begin{example}
We consider the function given by%
\begin{equation*}
S\left( x,\theta \right) =\sum\limits_{\substack{ \left\vert \alpha
\right\vert +\left\vert \beta \right\vert =2 \\ \alpha ,\beta \in \mathbb{N}%
^{n}}}C_{\alpha ,\beta }x^{\alpha }\theta ^{\beta },\text{ for }\left(
x,\theta \right) \in \mathbb{R}^{2n}
\end{equation*}%
where $C_{\alpha ,\beta }$ are real constants. This function satisfies $(G1),
$\textit{\ }$(G2)$\textit{\ and }$(G3).$
\end{example}

\end{document}